\numberwithin{equation}{section}
\titleformat*{\section}{\large\bfseries}
\newtheorem{thm}{Theorem}[section]
\newtheorem{lem}[thm]{Lemma}
\newtheorem{prop}[thm]{Proposition}
\newtheorem{rem}[thm]{Remark}
\newtheorem{hyp}[thm]{Hypothesis}
\begin{document}
\begin{center}
The IDS and Asymptotic of the Largest Eigenvalue of Random Schr\"{o}dinger Operators with Decaying Random Potential.\\~\\
Dhriti Ranjan Dolai\\
Indian Institute of Technology Dharwad \\
Dharwad  - 580011, India.\\
Email: dhriti@iitdh.ac.in
\end{center}
{\bf Abstract:} 
 In this work we obtain the integrated density of states for the Schr\"{o}dinger operators with decaying random potentials acting on $\ell^2(\mathbb{Z}^d)$. We also study the asymptotic of the largest and smallest eigenvalues of its finite volume approximation.\\~\\
 {\bf Mathematics Subject Classification (2010):} 47B80, 35P20.\\
{\bf Keywords:} Random Schr\"{o}dinger operators, integrated density of states, decaying random potential, ground states.
 
\section{Introduction}
The model we consider is given by
\begin{align}
\label{model}
H^\omega &=\Delta+ V^\omega,~~~\omega\in\Omega,\\
(\Delta u)(n) &=\sum_{|k-n|=1}u(k),~ u=\{u(n)\}\in\ell^2(\mathbb{Z}^d)\nonumber,\\
(V^\omega u)(n) &=\frac{\omega_n}{(1+|n|)^\alpha}~u(n), ~\alpha>0 \nonumber,
\end{align}
here $\{\omega_n\}_{n\in\mathbb{Z}^d}$ are i.i.d real random variables with common distribution $\mu$. We
 consider the probability space $\big(\mathbb{R}^{\mathbb{Z}^d}, \mathcal{B}_{\mathbb{R}^{\mathbb{Z}^d}}, \mathbb{P} \big)$, where $\mathbb{P}=\underset{n\in\mathbb{Z}^d}{\otimes}\mu$ constructed via the Kolmogorov theorem. We refer to this probability space as $\big(\Omega, \mathcal{B}_\Omega, \mathbb{P}\big)$ and denote $\omega=(\omega_n)_{n\in\mathbb{Z}^d}\in \Omega$.
The operator $\Delta$ is known as the discrete Laplacian and the decaying potential $V^\omega$ is nothing but the multiplication operator on $\ell^2(\mathbb{Z}^d)$ by the sequence $\big\{\frac{\omega_n}{(1+|n|^\alpha)}\big\}_{n\in\mathbb{Z}^d}$. We
note that the operators $\{H^\omega\}_{\omega\in\Omega}$ are self-adjoint and have common core domain consisting of vectors with finite support.\\~\\
\rule{70pt}{0.4pt}\\
\begin{footnotesize}
\scriptsize{\bf The author was partially supported by the Inspire Grant,
DST/INSPIRE/04/2017/000109}.
\end{footnotesize}
\clearpage
\noindent Denote $\Lambda_L\subset \mathbb{Z}^d$ to be the cube of side length $2L+1$ center at origin namely,
 $$\Lambda_L=\{n=(n_1, n_2,\cdots, n_d)\in\mathbb{Z}^d: |n_i|\leq L,~i=1,2,\cdots, d\} .$$
 Let $\chi_{_L}$ be the orthogonal projection onto $\ell^2(\Lambda_L)$. We define the matrices $H^\omega_L$, $\Delta_L$ 
 and $V^\omega_L$ of size $(2L+1)^d$ as 
 \begin{equation}
 \label{rest-box}
 H^\omega_L=\Delta_L+V^\omega_L,~~\Delta_L=\chi_{_L}\Delta\chi_{_L},~~V^\omega_L=\chi_LV^\omega\chi_L.
 \end{equation}
 Since the spectrum of $H^\omega_L$ and $\Delta_L$  are consisting of real eigenvalues, then one can define
 the eigenvalue counting function up to energy $E\in\mathbb{R}$, 
 \begin{align}
 \label{eig-cnt}
 \mathcal{N}^\omega_L(E): &=\#\bigg\{n: \lambda^\omega_n\leq E,~\lambda^\omega_n\in\sigma\big(H^\omega_L\big)\bigg\},\\
 \mathcal{N}^0_L(E) : &=\#\bigg\{n: \lambda^0_n\leq E,~\lambda^0_j\in\sigma\big(\Delta_L\big)\bigg\}.
 \end{align}
With these definitions in place, we state our main result :
\begin{thm}
\label{ids-decay}
Under the assumption $\mathbb{E}(\omega^2_0)<\infty$, 
the integrated density of states of the decaying model $H^\omega$ as in (\ref{model}) agrees with that of the free Laplacian. In other words we have,
\begin{equation}
\label{main-thm}
\lim_{L\to\infty}\frac{\mathcal{N}^\omega_L(E)}{(2L+1)^d}=\mathcal{N}^0(E),~E\in\mathbb{R}~~a.e~\omega,
\end{equation}
where $\mathcal{N}^0(E)$ is the integrated density of states of the discrete Laplacian, $\Delta$.
\end{thm}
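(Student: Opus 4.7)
The plan is to recast \eqref{main-thm} as vague convergence of the normalized eigenvalue counting measures $\nu^\omega_L := (2L+1)^{-d}\sum_{k}\delta_{\lambda^\omega_k}$ to the density-of-states measure $\nu^0$ of the free Laplacian $\Delta$, where $\lambda^\omega_k$ are the eigenvalues of $H^\omega_L$. Because $\nu^0$ is absolutely continuous (by Fourier analysis on $\TT^d$), $\mathcal{N}^0$ is continuous on $\RR$ and vague convergence is equivalent to \eqref{main-thm} pointwise in $E$. The deterministic companion $\nu^0_L \Rightarrow \nu^0$ (where $\nu^0_L$ is built from $\Delta_L$) is classical, so I need only show $\int f\,d\nu^\omega_L - \int f\,d\nu^0_L \to 0$ almost surely for every $f \in C^1_c(\RR)$.

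For such $f$, I would apply the finite-dimensional Dalecki\u{\i}--Kre\u{\i}n identity
\begin{equation*}
\text{tr}\,f(H^\omega_L) - \text{tr}\,f(\Delta_L) = \int_0^1 \text{tr}\bigl(f'(\Delta_L+sV^\omega_L)\,V^\omega_L\bigr)\,ds,
\end{equation*}
which, because $V^\omega_L$ is diagonal and hence manifestly trace class, yields
\begin{equation*}
\bigl|\text{tr}\,f(H^\omega_L) - \text{tr}\,f(\Delta_L)\bigr| \leq \|f'\|_\infty\,\text{tr}\,|V^\omega_L| = \|f'\|_\infty \sum_{n\in\Lambda_L}\frac{|\omega_n|}{(1+|n|)^\alpha}.
\end{equation*}
Dividing by $(2L+1)^d$, it then suffices to establish
\begin{equation*}
S_L := \frac{1}{(2L+1)^d}\sum_{n\in\Lambda_L}\frac{|\omega_n|}{(1+|n|)^\alpha}\,\xrightarrow{L\to\infty}\,0 \quad \text{a.s.}
\end{equation*}

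The hypothesis $\EE\omega_0^2 < \infty$ enters exactly here. One computes $\EE S_L = \EE|\omega_0|\,(2L+1)^{-d}\sum_{|n|\leq L}(1+|n|)^{-\alpha} = o(1)$ for every $\alpha>0$ (by distinguishing $\alpha<d$, $\alpha=d$, $\alpha>d$), and $\mathrm{Var}(S_L) \leq \EE\omega_0^2 \,(2L+1)^{-2d}\sum_{|n|\leq L}(1+|n|)^{-2\alpha} = O((2L+1)^{-d})$. Chebyshev tail bounds along the geometric subsequence $L_k = 2^k$ are therefore summable, giving $S_{L_k}\to 0$ a.s.\ by Borel--Cantelli; monotonicity of $\sum_{n\in\Lambda_L}|\omega_n|/(1+|n|)^\alpha$ in $L$ combined with the polynomial growth of $(2L+1)^d$ interpolates to full convergence $S_L\to 0$ a.s.

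To pass from integration against $C^1_c$ test functions to the indicator $\mathbf{1}_{(-\infty,E]}$ that defines $\mathcal{N}^\omega_L$, for each $E\in\RR$ and $\epsilon>0$ I would use absolute continuity of $\nu^0$ to select $f_-,f_+\in C^1_c(\RR)$ with $f_-\leq \mathbf{1}_{(-\infty,E]}\leq f_+$ and $\int(f_+-f_-)\,d\nu^0 < \epsilon$, sandwich $(2L+1)^{-d}\mathcal{N}^\omega_L(E)$ between $\int f_\pm\,d\nu^\omega_L$, take $L\to\infty$, and then let $\epsilon\to 0$. The principal technical hurdle is the trace-comparison step: the operator norm $\|V^\omega_L\|$ does not decay and may even diverge in $L$ when $2\alpha\leq d$, so $V^\omega_L$ cannot be treated as a uniformly small perturbation. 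The Dalecki\u{\i}--Kre\u{\i}n identity sidesteps this by producing a trace-norm bound, and the diagonal structure of $V^\omega_L$ makes that trace norm explicit enough that the second-moment hypothesis on $\omega_0$ is essentially sharp.
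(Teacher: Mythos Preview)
Your argument is correct and proceeds along a genuinely different path from the paper's. The paper bounds the $L^2$-Wasserstein distance between the empirical eigenvalue measures of $H^\omega_L$ and $\Delta_L$ via the Hoffman--Wielandt inequality,
\[
\bigl(W_2(\mu^\omega_L,\mu^0_L)\bigr)^2 \le \frac{1}{(2L+1)^d}\,\mathrm{tr}\bigl((V^\omega_L)^2\bigr)=\frac{1}{(2L+1)^d}\sum_{n\in\Lambda_L}\frac{\omega_n^2}{(1+|n|)^{2\alpha}},
\]
and then drives this to zero a.s.\ by recognising the centred sum as a bounded martingale and invoking the martingale convergence theorem; weak convergence follows from the equivalence with Wasserstein convergence. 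You instead work test-function by test-function via the Dalecki\u{\i}--Kre\u{\i}n identity, obtaining a bound linear in the trace norm $\|V^\omega_L\|_1=\sum_{n}|\omega_n|(1+|n|)^{-\alpha}$, and control the normalised sum by a mean--variance estimate, Borel--Cantelli along $L_k=2^k$, and monotone interpolation. The paper's route is more structural---a single scalar (the Wasserstein distance) carries the whole comparison, and $\mathbb E(\omega_0^2)$ enters directly as $\mathrm{tr}((V^\omega_L)^2)$---whereas yours is more elementary, avoiding both Wasserstein theory and the martingale convergence theorem; the second moment enters for you only through the variance bound. One minor repair: no compactly supported $f_+$ can dominate $\mathbf 1_{(-\infty,E]}$. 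The clean fix is to first deduce tightness of $\{\nu^\omega_L\}$ from $\int g\,d\nu^\omega_L\to 1$ for any $g\in C^1_c$ equal to $1$ on a neighbourhood of $[-2d,2d]$; vague convergence then upgrades to weak convergence, and the CDFs converge at every continuity point of $\mathcal N^0$.
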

\noindent One can note that the density of the distribution function $\mathcal{N}^0(\cdot)$ is the density (w.r.t Lebesgue measure) of the measure $\langle \delta_0, E_{\Delta}(\cdot)\delta_0\rangle$, where $E_\Delta(\cdot)$ be the spectral measure of the discrete Laplacian, $\Delta$ and $\{\delta_n\}_{n\in\mathbb{Z}^d}$ is the standard basis of $\ell^2(\mathbb{Z}^d)$. 
An explicit calculation of $\langle \delta_0, e^{it\Delta} \delta_0\rangle$, the Fourier transformation of the measure $\langle \delta_0, E_{\Delta}(\cdot)\delta_0\rangle$ is given in \cite[Lemma 4.1.8]{KD}.
\begin{rem}
The techniques used in proving the Theorem \ref{ids-decay} will also work for the potentials of the form 
$V^\omega=\displaystyle\sum_{n\in\mathbb{Z}^d}a_n~\omega_n|\delta_m\rangle \langle \delta_n|,~a_n\in\mathbb{R}$ and give the same result as long as 
$\displaystyle\sum_{n\in\Lambda_L}a_n^2=o\bigg((2L+1)^d\bigg)$and $\mathbb{E}(\omega_0^2)<\infty$.
\end{rem}
\noindent A useful object in studying random Schr\"{o}inger operators is the integrated density of states (IDS), it measures the ``number of energy levels per unit volume'' below a given energy. 
For the ergodic models ($\alpha=0$) the IDS can be defined as the thermodynamic limit of the eigenvalue counting function. We refer to \cite{Kir} for the proof of the existence of the limit. The basic facts
about the integrated density of states can be found in any of the standard books in this
area, for example Figotin-Pastur \cite{PF}, Cycon-Froese-Kirsch-Simon \cite{CFK}, Carmona-Lacroix \cite{CL} and Veseli\'{c} \cite{V}.\\~\\
However, in the absence of ergodicity (of the potential) the existence of the IDS is not immediate and very few results are known so far.\\~\\
In \cite{GJMS}, Gordon-Jak\v{s}i\'{c}-Mol\u{c}anov-Simon considered the model on $\ell^2(\mathbb{Z}^d)$ with growing potential:
$$H^\omega=-\Delta+\displaystyle \sum_{n\in\mathbb{Z}^d} (1+|n|^\alpha)~\omega_n,~~\alpha>0,$$
where $\{\omega_n\}_{n\in\mathbb{Z}^d}$ are i.i.d random variables uniformly distributed on $[0,1]$. The authors constructed a strictly decreasing sequence of non random positive numbers $\{a_j\}_{j\in \mathbb{N}}$ (with $a_0=\infty$) such that if we take      $\frac{d}{k+1}<\alpha<\frac{d}{k},~k\in\mathbb{N}$ and $E\in (a_j, a_{j-1})$ then
 $$\displaystyle\lim_{L\to\infty}\frac{\mathcal{N}^\omega_L(E)}{L^{d-j\alpha}}=\mathcal{N}_j(E)~~a.e~\omega,~1\leq j\leq k .$$
 Here $\mathcal{N}_j(\cdot)$ are independent of $\omega$, for the proof we refer to \cite[Theorem 1.4]{GJMS}.\\~\\
B\'{o}cker in his thesis \cite{Bs}
showed the strong law of large numbers for sparse random potentials. He also studied the density of surface states for some non-stationary potentials. Using a Laplace transform they studied the asymptotic behaviour of the integrated density of surface states for random Gaussian surface potentials.
In \cite{BKS}, B\'{o}cker-Werner-Stollmann  review some results on the spectral theory of non-stationary random potentials (see also \cite{KM}). They present various models with decaying and sparse random potentials, including those where the sparse set itself is random. Their results include a definition of the integrated density of states and some results on Lifshits tails for such models. \\~\\
In \cite{D}, the author 
find the candidate for normalization to find the analogous of the IDS outside $[-2d, 2d]$ for the decaying model as defined by (\ref{model}). If we take $\frac{d\mu}{dx}(x)=\Theta(|x|^{-\delta})$ as $|x|\to\infty$ with $0<\alpha\delta< d$ then the normalization quantity (outside $[-2d, 2d]$) is given by $\beta_L=(2L+1)^{d-\alpha\delta}$. For the details we refer to \cite[Theorem 1.4]{D} and \cite[Theorem 1.8]{D}. In other words inside the region $\mathbb{R}\setminus [-2d,2d]$ the average spacing between two consecutive eigenvalues of $H^\omega_L$ is of order $(2L+1)^{-(d-\alpha\delta)}$.\\~\\
Theorem \ref{ids-decay}  shows that if we compute the IDS with the normalization $(2L+1)^d$ (for any $\alpha>0$) the density of states measure is supported inside $[-2d, 2d]$. The average spacing between two consecutive eigenvalues is of order 
$(2L+1)^{-d}$ in $[-2d,2d]$.\\~\\
To investigate the asymptotic distributions of the highest and lowest 
eigenvalues of $H^\omega_L$  we have to assume some additional properties on the probability measure $\mu$ and the exponent $ \alpha$.
\begin{hyp}
 \label{hyp}
\begin{enumerate}
  \item The measure $\mu$ is absolutely continuous with respect to Lebesgue measure and its density is given by 
\begin{equation*}
\frac{d\mu}{dx}(x) = \left\{
\begin{array}{rl}
0 & \text{if } ~~|x| < 1\\
  \frac{\delta}{2}\frac{1}{|x|^{1+\delta}} & \text{if } ~~|x|>1,~\delta>0~~.\\
\end{array} \right.
\end{equation*}
\item The pair $(\alpha, \delta)$ satisfies the condition $0<\alpha\delta\leq d$.
\item Since 
\begin{equation*}
 \sum_{n\in\Lambda_L}\frac{1}{(1+|n|)^{\alpha\delta}}=\left\{
\begin{array}{rl}
\Theta\bigg((2L+1)^{d-\alpha\delta}\bigg) & \text{if } ~~0<\alpha\delta<d\\
  \Theta \bigg(\ln\big((2L+1)\big)\bigg) & \text{if } ~~\alpha\delta=d.
\end{array} \right.
 \end{equation*}
 We define $$b:=\lim_{L\to\infty}\frac{1}{\Gamma_L^\delta} \displaystyle\sum_{n\in\Lambda_L}\frac{1}{(1+|n|)^{\alpha\delta}},$$ where
\begin{equation*}
\Gamma_L :=\left\{
\begin{array}{rl}
(2L+1)^{\frac{d-\alpha\delta}{\delta}} & \text{if } ~~0<\alpha\delta<d\\
\big(\ln(2L+1)\big)^{\frac{1}{\delta}}  & \text{if } ~~\alpha\delta=d.
\end{array} \right.
\end{equation*}
\end{enumerate}
  \end{hyp}
  \begin{rem}
  \label{gen-mu}
Here we have taken the explicit  expression for $\frac{d\mu}{dx}(x)$ in order to make the calculations (in the proofs) simple. The result is still valid if the density of the single site potential, $\mu$ is of the form $\frac{d\mu}{dx}(x)=\Theta\big(\frac{1}{|x|^{1+\delta}}\big),~\delta>0$. 
  \end{rem}
\noindent In \cite{KKO}, Kirsch-Krishna-Obermeit considered the same  model $H^\omega$ (\ref{model}) and investigated the spectral properties (see also \cite{K}, \cite{DSS}). Under the Hypothesis \ref{hyp} it is easy to verify $a_n-supp=\mathbb{R}$,
see\cite[Definition 2.1]{KKO} for more details. Now \cite[Theorem 2.4, Corollary 2.5]{KKO} implies $\sigma(H^\omega)=\mathbb{R}$ a.e $\omega$ and one of its implication will be $\max\{\sigma(H^\omega_L)\}$ and  $\min\{\sigma(H^\omega_L)\}$ converge to $\infty$ and $-\infty$, respectively as $L\to\infty$.
Therefore, to find the asymptotic distributions of the highest and lowest eigenvalues of $H^\omega_L$ we scale it down by a  non random constant, depends only on $L$.\\~\\
Let's set a few notations before describe our second result:
 \begin{align}
 \label{max-eig}
 E^{max}_L(\omega):=\max\sigma\big(H^\omega_L\big),~~~\tilde{E}^{\max}_L(\omega):=\max\sigma(V^\omega_L),\\
 \label{min-eig}
  E^{min}_L(\omega):=\min\sigma\big(H^\omega_L\big),~~~\tilde{E}^{\min}_L(\omega):=\min\sigma(V^\omega_L) .
 \end{align}
 \noindent Next theorem gives the informations about the distribution of the highest (\ref{max-eig}) and lowest (\ref{min-eig}) eigenvalues of $H^\omega_L$ (\ref{rest-box}).
 \begin{thm}
 \label{asy-dis}
 The asymptotic distribution of $E_L^{max}$, the highest eigenvalue of $H^\omega_L$ is given  by
 \begin{equation}
 \label{max-eig-dis}
 \lim_{L\to\infty}\mathbb{P}\bigg(\omega: \frac{E^{max}_L(\omega)}{{\Gamma_L}}\leq x\bigg)=\left\{
\begin{array}{rl}
e^{-\frac{b}{2x^\delta}} & \text{if } ~~x>0\\
  0 & \text{if } ~~x\leq 0~.\\
\end{array} \right.
 \end{equation}
 The asymptotic distribution of $E_L^{min}$, the lowest eigenvalue of $H^\omega_L$  is given by
 \begin{equation}
 \label{min-eig-dis}
 \lim_{L\to\infty}\mathbb{P}\bigg(\omega: \frac{E^{min}_L(\omega)}{{\Gamma_L}}\leq x\bigg)=\left\{
\begin{array}{rl}
1 & \text{if } ~~x\ge 0\\
1-e^{-\frac{b}{2|x|^\delta}} & \text{if } ~~x<0~.
  \end{array} \right.
 \end{equation}
 In the above $\Gamma_L$ and $b>0$ are defined by [3, Hypothesis {\ref{hyp}}].
  \end{thm}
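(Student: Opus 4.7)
My strategy is to reduce the problem to the distribution of the extrema of the diagonal potential $V^\omega_L$, and then do a standard extreme-value computation using the Pareto tails of $\mu$. First, because $V^\omega_L$ is diagonal with entries $\omega_n/(1+|n|)^\alpha$ and $\|\Delta_L\|\leq\|\Delta\|\leq 2d$, Weyl's inequality gives
\[
\bigl|E^{\max}_L(\omega)-\tilde E^{\max}_L(\omega)\bigr|\leq 2d,\qquad \bigl|E^{\min}_L(\omega)-\tilde E^{\min}_L(\omega)\bigr|\leq 2d.
\]
Since $\Gamma_L\to\infty$, this reduces the theorem to the limiting laws of $\tilde E^{\max}_L/\Gamma_L$ and $\tilde E^{\min}_L/\Gamma_L$. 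For these I would write $\tilde E^{\max}_L(\omega)=\max_{n\in\Lambda_L}\omega_n/(1+|n|)^\alpha$ and use independence of the $\omega_n$'s:
\[
\mathbb P\!\left(\frac{\tilde E^{\max}_L}{\Gamma_L}\leq x\right)=\prod_{n\in\Lambda_L}F\!\bigl(x\,\Gamma_L(1+|n|)^\alpha\bigr),
\]
where $F$ is the CDF of $\mu$. From Hypothesis \ref{hyp}(1), $1-F(t)=\tfrac12 t^{-\delta}$ for $t\geq 1$.

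Fix $x>0$. Since $\Gamma_L\to\infty$, for $L$ large and every $n\in\Lambda_L$ we have $x\Gamma_L(1+|n|)^\alpha\geq 1$, so
\[
\log \mathbb P\!\left(\frac{\tilde E^{\max}_L}{\Gamma_L}\leq x\right)=\sum_{n\in\Lambda_L}\log(1-y_{L,n}),\qquad y_{L,n}:=\frac{1}{2x^\delta \Gamma_L^\delta(1+|n|)^{\alpha\delta}}.
\]
By Hypothesis \ref{hyp}(3), $\sum_{n\in\Lambda_L}y_{L,n}\to b/(2x^\delta)$ directly from the definition of $b$, while $\max_n y_{L,n}\leq(2x^\delta \Gamma_L^\delta)^{-1}\to 0$. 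The expansion $\log(1-y)=-y+O(y^2)$ applied uniformly in $n$ then gives $\sum_n\log(1-y_{L,n})\to -b/(2x^\delta)$, yielding \eqref{max-eig-dis} for $x>0$. For $x\leq 0$ the event $\{\tilde E^{\max}_L\leq x\Gamma_L\}$ is contained in $\{\omega_n\leq 0\text{ for all }n\in\Lambda_L\}$, which has probability $2^{-(2L+1)^d}\to 0$, matching the stated limit.

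For the minimum I would exploit the symmetry of $\mu$: its density is even, so $(-\omega_n)_{n\in\mathbb Z^d}$ has the same joint law as $(\omega_n)_{n\in\mathbb Z^d}$, and hence $-\tilde E^{\min}_L$ is distributed as $\tilde E^{\max}_L$. Then \eqref{min-eig-dis} drops out of \eqref{max-eig-dis} by $x\mapsto -x$, using continuity of the limit distribution at positive arguments. The only delicate point in this program is the uniform smallness of $\{y_{L,n}\}$ needed to trade the product for the sum of logarithms; but this is automatic because the cumulative quadratic remainder is bounded by $\max_n y_{L,n}\cdot\sum_n y_{L,n}$, and both factors are already controlled above.
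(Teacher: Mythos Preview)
Your proposal is correct and follows essentially the same route as the paper: bound $|E^{\max}_L-\tilde E^{\max}_L|\le\|\Delta_L\|\le 2d$ (the paper cites the Min--Max theorem, you cite Weyl; same content), divide by $\Gamma_L\to\infty$ to reduce to the diagonal extrema, factor over $n\in\Lambda_L$ by independence, and expand $\log(1-y_{L,n})$ using the Pareto tail and Hypothesis~\ref{hyp}(3). The only cosmetic difference is in the treatment of the minimum: the paper repeats the computation for $\tilde M_L(x)$, whereas you invoke the evenness of $d\mu/dx$ to get $-\tilde E^{\min}_L\stackrel{d}{=}\tilde E^{\max}_L$ and read off \eqref{min-eig-dis} from \eqref{max-eig-dis}; this is a legitimate shortcut and arguably cleaner.
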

 \begin{rem}
 \label{iid}
 The above result can also be achieved for the ergodic models i.e, when $\alpha=0$ provided $\delta$ is positive.
 For $\delta>0$, the ergodic model has spectrum on whole real line.
In that case our $\Gamma_L:=(2L+1)^{\frac{d}{\delta}}$ and $b=1$.  
 \end{rem}
\noindent Earlier the distribution of the eigenvalues for one dimensional continuum models were studied
by McKean\cite{Mc} (see also \cite{CM}), Grenkova-Mol\u{c}anov-Sudarev \cite{GrMo} and Kotani-Quoc \cite{KQ} . \\~\\
In \cite{Mc}, the author considered $-\frac{d^2}{dx^2}+q$ on $L^2[0,M]$ in which $q$ is the standard white noise potential and showed that
$$M\mathcal{N}\big(\lambda_1(M)\big) \xrightarrow[M\to\infty]{weakly}e^{-x}dx,$$
where $\mathcal{N}(\cdot)$ is the integrated density of states and $\lambda_1(M)$ is the ground state.\\~\\
In \cite{KQ}, the authors studied the distribution (as $M\to\infty$) of the individual eigenvalue of $-\frac{d^2}{dx^2}+\frac{dQ(x)}{dx}$ on $L^2[0,M]$, here $Q(x)$ is a one dimensional compound Poisson process. More precisely, they find the limiting distribution of $M\mathcal{N}\big(\lambda_k(M)\big)$ as $M\to\infty$, for each $k$. In \cite{GrMo},  the
 limit of the joint distribution of the first $k$ eigenvalues, $\lambda_1(M), \lambda_2(M),\cdots, \lambda_k(M)$ was obtained under suitable normalization.
\section{Proof of the results}
In this section we prove the Theorems \ref {ids-decay} and \ref{asy-dis}. Before going to the proofs, we collect few results about the distance between eigenvalues of two symmetric matrices and weak convergence of probability measures, which are very useful in our proofs. 
Let's first describe the  Hoffman-Wielandt inequality, the proof can be found in \cite[Lemma 2.1.19]{AGZ}.
\begin{lem}
\label{HW}
Let $A, B$ be $N\times N$ symmetric matrices, with eigenvalues $\lambda_1^A\leq \lambda_2^A\leq \cdots \leq \lambda_N^A$ and
$\lambda_1^B\leq \lambda_2^B\leq\cdots\leq \lambda_N^B$. Then
\begin{equation}
\label{HW-inq}
\sum_{j=1}^N|\lambda_j^A-\lambda_j^B|^2\leq tr(A-B)^2.
\end{equation}
\end{lem}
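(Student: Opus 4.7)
The plan is to reduce the inequality to a statement about doubly stochastic matrices and then invoke a rearrangement/convexity argument. First, I would diagonalize: write $A=UD_AU^T$ and $B=VD_BV^T$ where $D_A=\mathrm{diag}(\lambda_1^A,\dots,\lambda_N^A)$ and $D_B=\mathrm{diag}(\lambda_1^B,\dots,\lambda_N^B)$ are in increasing order. Expanding,
\begin{equation*}
\mathrm{tr}(A-B)^2=\sum_{j=1}^N(\lambda_j^A)^2+\sum_{j=1}^N(\lambda_j^B)^2-2\,\mathrm{tr}(AB),
\end{equation*}
while the left-hand side of (\ref{HW-inq}) equals $\sum_j (\lambda_j^A)^2+\sum_j(\lambda_j^B)^2-2\sum_j \lambda_j^A\lambda_j^B$. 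Therefore the lemma is equivalent to the inequality $\mathrm{tr}(AB)\le \sum_{j=1}^N \lambda_j^A\lambda_j^B$ when both eigenvalue sequences are listed in the same (increasing) order.

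Next I would analyze the cross-term. Setting $O:=U^TV$, which is orthogonal, a direct computation gives
\begin{equation*}
\mathrm{tr}(AB)=\mathrm{tr}(D_A O D_B O^T)=\sum_{i,j=1}^N P_{ij}\,\lambda_i^A\lambda_j^B,\qquad P_{ij}:=O_{ij}^2.
\end{equation*}
Since $O$ is orthogonal, the matrix $P=(P_{ij})$ has nonnegative entries with all row and column sums equal to $1$, i.e.\ $P$ is doubly stochastic. Thus $\mathrm{tr}(AB)$ is a linear functional of $P$ over the Birkhoff polytope.

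By the Birkhoff–von Neumann theorem, the extreme points of the set of doubly stochastic matrices are permutation matrices, so the maximum of the linear functional $P\mapsto \sum_{i,j}P_{ij}\lambda_i^A\lambda_j^B$ over this polytope is attained at some permutation $\sigma$, giving the value $\sum_i \lambda_i^A\lambda_{\sigma(i)}^B$. The classical rearrangement inequality then ensures that, for two sequences ordered in the same way, this maximum is realized by the identity permutation $\sigma=\mathrm{id}$, yielding $\mathrm{tr}(AB)\le\sum_i \lambda_i^A\lambda_i^B$, and hence (\ref{HW-inq}).

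The only delicate step is the reduction to a doubly stochastic matrix and the subsequent rearrangement argument; the algebraic manipulations are straightforward, but the optimization over the Birkhoff polytope is where the inequality genuinely uses the assumption that the eigenvalues on both sides are sorted in the same order. Everything else is bookkeeping with traces and orthogonal invariance.
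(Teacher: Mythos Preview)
Your argument is correct and is the standard proof of the Hoffman--Wielandt inequality via Birkhoff--von Neumann and the rearrangement inequality. Note that the paper does not supply its own proof of this lemma; it simply cites \cite[Lemma~2.1.19]{AGZ}, whose proof follows essentially the same doubly-stochastic reduction you outline, so there is no meaningful divergence to compare.
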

\noindent Let $\mu, ~\nu$ be two probability measures on $\mathbb{R}$ then $W_p(\mu, \nu),~1\leq p<\infty$, the $L^p$-Wasserstein distances between them is defined by
$$W_p(\mu, \nu):=\bigg(\inf_{\pi\in\Pi(\mu, \nu)}\int_{\mathbb{R}\times \mathbb{R}}|x-y|^pd\pi(x,y)\bigg)^{\frac{1}{p}}.$$
In above definition $\Pi(\mu, \nu)$ denote the collection of all probability measures define on $\mathbb{R}\times \mathbb{R}$ such that the first marginal is $\mu$ and the second marginal is $\nu$.\\~\\
\noindent One can define  probability measures $\mu_A$ and $\mu_B$ on $\mathbb{R}$ associated with the eigenvalues of $A$ and $ B$, considered in Lemma $\ref{HW}$;
\begin{align}
\label{sda}
\mu_A(\cdot): &=\frac{1}{N}\sum_{k=1}^N\delta_{\lambda^A_k}(\cdot),\\
\label{sdb}
\mu_B(\cdot): &= \frac{1}{N}\sum_{k=1}^N\delta_{\lambda^B_k}(\cdot).
\end{align}
\begin{prop}
\label{W-est}
An estimation of the $L^2$-Wasserstein distances between $\mu_A$ and $\mu_B$ can be given by
\begin{equation}
\label{dis-a-b}
\big(W_2(\mu_A, \mu_B)\big)^2\leq \frac{1}{N} tr(A-B)^2.
\end{equation}
\end{prop}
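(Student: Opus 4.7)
The strategy is to exhibit a single coupling of $\mu_A$ and $\mu_B$ whose associated cost already matches the right-hand side; since $W_2$ is an infimum over couplings, this will deliver the bound. The only substantive input beyond bookkeeping is the Hoffman--Wielandt inequality from Lemma \ref{HW}.

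First, I would introduce the natural diagonal coupling arising from the ordered eigenvalues,
\begin{equation*}
\pi := \frac{1}{N}\sum_{k=1}^{N}\delta_{(\lambda_k^A,\,\lambda_k^B)},
\end{equation*}
a probability measure on $\mathbb{R}\times\mathbb{R}$. The order of the eigenvalues matters here: by pairing the $k$-th smallest eigenvalue of $A$ with the $k$-th smallest eigenvalue of $B$, this coupling is the one suggested by the structure of (\ref{HW-inq}).

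Next, I would check that $\pi\in\Pi(\mu_A,\mu_B)$. Computing the first marginal gives $\pi(\cdot\times\mathbb{R})=\frac{1}{N}\sum_{k=1}^{N}\delta_{\lambda_k^A}(\cdot)=\mu_A(\cdot)$ by (\ref{sda}), and the second marginal equals $\mu_B$ by (\ref{sdb}). Hence $\pi$ is admissible in the infimum defining $W_2(\mu_A,\mu_B)$.

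Finally, I would evaluate the transport cost for this coupling and invoke Lemma \ref{HW}. We have
\begin{equation*}
\int_{\mathbb{R}\times\mathbb{R}} |x-y|^{2}\, d\pi(x,y) \;=\; \frac{1}{N}\sum_{k=1}^{N} |\lambda_k^A-\lambda_k^B|^{2} \;\leq\; \frac{1}{N}\,\mathrm{tr}(A-B)^{2},
\end{equation*}
where the inequality is exactly (\ref{HW-inq}). Taking the infimum over all couplings on the left yields $\bigl(W_2(\mu_A,\mu_B)\bigr)^2\leq \frac{1}{N}\mathrm{tr}(A-B)^2$, which is (\ref{dis-a-b}). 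There is no real obstacle beyond the verification of the marginals; the content of the proposition is essentially a reformulation of Hoffman--Wielandt in Wasserstein language.
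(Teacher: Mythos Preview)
Your proof is correct and follows essentially the same approach as the paper: construct the diagonal coupling $\pi=\frac{1}{N}\sum_{k=1}^N\delta_{(\lambda_k^A,\lambda_k^B)}$, verify its marginals are $\mu_A$ and $\mu_B$, and bound the resulting transport cost via the Hoffman--Wielandt inequality (Lemma~\ref{HW}). There is no substantive difference between your argument and the paper's.
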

\begin{proof}
We define a probability measure $\pi_0(\cdot, \cdot)$ on $\mathbb{R}\times\mathbb{R}$ such that the first marginal is $\mu_A$ and second marginal is $\mu_B$ and it is given by
\begin{equation*}
\pi_0(\cdot)=\frac{1}{N}\sum_{k=1}^\infty \delta_{(\lambda^A_k, \lambda_k^B)}(\cdot).
\end{equation*}
The $L^2$-Wasserstein distances between $\mu_A$ and $\mu_B$ can be bounded as:
\begin{align*}
W_2(\mu_A, \mu_B): &=\bigg(\inf_{\pi\in\Pi(\mu_A, \mu_B)}\int_{\mathbb{R}\times \mathbb{R}}|x-y|^2d\pi(x,y)\bigg)^{\frac{1}{2}}\\
  &\leq\bigg(\int_{\mathbb{R}\times \mathbb{R}}|x-y|^2d\pi_0(x,y)\bigg)^{\frac{1}{2}}\\
  &\leq \bigg(\frac{1}{N}\sum_{k=1}^N|\lambda^A_k-\lambda^B_k|^2\bigg)^{\frac{1}{2}}\\
  & \leq \frac{1}{\sqrt{N}}\bigg(tr(A-B)^2\bigg)^{\frac{1}{2}}.
\end{align*}
In the last line we used Lemma \ref{HW}.
\end{proof}
\noindent The next theorem characterize the weak convergence of measures in Wasserstein sense, for the proof we refer to \cite[Theorem 6.9]{Vi}.
\begin{thm}
\label{eqiv}
Let $\{\mu_L\}_{L\ge 1}$ and $\mu$ are probability measures define on $\mathbb{R}$, then $\mu_L$ converges to $\mu$ weakly
if and only if $W_p(\mu_L, \mu)\to 0$ as $L\to\infty$.
\end{thm}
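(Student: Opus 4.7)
The plan is to prove the two implications separately, each by reducing Wasserstein control to a statement about integrals against a suitable class of test functions.

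For the direction ``$W_p(\mu_L,\mu)\to 0$ implies weak convergence'', I would first apply H\"older's (or Jensen's) inequality inside the infimum defining $W_p$ to obtain $W_1(\mu_L,\mu)\le W_p(\mu_L,\mu)$, so it suffices to treat $p=1$. For $p=1$ I would invoke the Kantorovich--Rubinstein duality,
\[
W_1(\mu_L,\mu)=\sup_{\mathrm{Lip}(f)\le 1}\left|\int f\,d\mu_L-\int f\,d\mu\right|,
\]
which immediately gives $\int f\,d\mu_L\to\int f\,d\mu$ for every bounded $1$-Lipschitz $f$. Because bounded Lipschitz functions are dense in $C_b(\mathbb{R})$ in the topology of uniform convergence on compacts, and because $W_1$-convergence forces tightness (mass cannot escape to infinity without costing $W_1$-distance, since $\mu$ has a finite first moment), the convergence extends to all $f\in C_b(\mathbb{R})$, which is precisely weak convergence.

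For the converse, I would use Skorokhod's representation theorem: given $\mu_L\to\mu$ weakly, realise random variables $X_L\sim\mu_L$ and $X\sim\mu$ on a common probability space so that $X_L\to X$ almost surely. The joint law of $(X_L,X)$ is then a coupling in $\Pi(\mu_L,\mu)$, whence
\[
W_p(\mu_L,\mu)^p\le \mathbb{E}\bigl|X_L-X\bigr|^p.
\]
Almost-sure convergence of $|X_L-X|^p$ to $0$, together with uniform integrability of $\{|X_L-X|^p\}_L$, upgrades this to $L^p$-convergence and finishes the proof. The uniform integrability requires an extra hypothesis, namely the convergence of absolute $p$-th moments $\int|x|^p\,d\mu_L\to\int|x|^p\,d\mu$, which is the precise formulation of \cite[Theorem~6.9]{Vi}. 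In the applications of this theorem later in the paper, the empirical measures $\mu_{H^\omega_L}$ and $\mu_{\Delta_L}$ either have support controlled by Proposition~\ref{W-est} or enjoy moment bounds coming from $\mathbb{E}(\omega_0^2)<\infty$, so the equivalence applies.

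The main obstacle is precisely the uniform-integrability step in the ``weak $\Rightarrow$ $W_p$'' direction: weak convergence alone does not prevent mass from leaking to infinity in the $p$-th moment, and without the accompanying moment control the implication can fail outright (e.g.\ $\mu_L=(1-\tfrac{1}{L})\delta_0+\tfrac{1}{L}\delta_L$ converges weakly to $\delta_0$ but has $W_1(\mu_L,\delta_0)=1$). The statement in the excerpt should therefore be understood with the implicit moment-convergence hypothesis of Villani's theorem, a hypothesis that is automatic in every place this equivalence is used below.
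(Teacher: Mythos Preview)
The paper does not supply its own proof of this theorem; it simply refers the reader to \cite[Theorem~6.9]{Vi}. Your sketch therefore goes well beyond what the paper provides, and its two halves are essentially the standard arguments underlying Villani's result: Kantorovich--Rubinstein duality (plus the Portmanteau characterisation via bounded Lipschitz test functions) for the forward direction, and a Skorokhod coupling combined with uniform integrability for the converse.

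You are also right to flag that the equivalence, as literally stated, is too strong: weak convergence alone does not force $W_p(\mu_L,\mu)\to 0$, and your example $\mu_L=(1-\tfrac{1}{L})\delta_0+\tfrac{1}{L}\delta_L$ is the canonical counterexample. Villani's Theorem~6.9 indeed carries the additional hypothesis of convergence of $p$-th absolute moments, which the paper's statement suppresses. For the paper's purposes this omission is harmless: in the only place Theorem~\ref{eqiv} is invoked (the proof of Theorem~\ref{ids-decay}), one shows $W_2(\mu^\omega_L,\mu^0_L)\to 0$ and then uses only the implication ``$W_p\to 0\Rightarrow$ weak convergence'', which is unconditionally true. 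The companion fact, that $\mu^0_L$ converges weakly to $\mathcal{N}^0$, involves measures supported in $[-2d,2d]$ and so causes no moment difficulties either.
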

\noindent With all these preliminaries results, we are in the position to present the proof of the main theorem.\\~\\
{\bf Proof of the Theorem \ref{ids-decay}:}  Let's start with the definitions,
\begin{align}
\label{ldos}
\mu^\omega_L(\cdot): &=\frac{1}{(2L+1)^d}\sum_{j}\delta_{\lambda_j^\omega}(\cdot),~~\lambda^\omega_j\in\sigma(H^\omega_L),\\
\label{ldos-lap}
\mu^0_L(\cdot): &=\frac{1}{(2L+1)^d}\sum_{j}\delta_{\lambda^0_j}(\cdot),~~\lambda^0_j\in\sigma(\Delta_L).
\end{align}
Since $m_2:=\mathbb{E}(\omega_0^2)<\infty$ then using the Proposition \ref{W-est} we write:
\begin{align}
\big(W_2(\mu^\omega_L, \mu^0_L)\big)^2 &\leq \frac{1}{(2L+1)^d} tr\bigg(\big(V^\omega_L\big)^2\bigg),~~H^\omega_L=\Delta_L+V^\omega_L\nonumber \\
   &=\frac{1}{(2L+1)^d}\sum_{n\in\Lambda_L}\frac{\omega_n^2}{(1+|n|)^{2\alpha}}\nonumber\\
   &=\frac{1}{(2L+1)^d}\sum_{n\in\Lambda_L}\frac{\omega_n^2-m_2}{(1+|n|)^{2\alpha}}\nonumber\\
   & \qquad \qquad ~~~~+\frac{m_2}{(2L+1)^d}\sum_{n\in\Lambda_L}\frac{1}{(1+|n|)^{2\alpha}}\nonumber\\
   &\leq \frac{C}{(2L+1)^\epsilon}\sum_{n\in\Lambda_L}\frac{\omega_n^2-m_2}{(1+|n|)^{d+2\alpha-\epsilon}}\nonumber\\
   & \qquad \qquad ~~+m_2\frac{C}{(2L+1)^\epsilon}\sum_{n\in\Lambda_L}\frac{1}{(1+|n|)^{d+2\alpha-\epsilon}}\nonumber\\
   \label{est}
   & =\frac{C}{(2L+1)^\epsilon} X_L(\omega)+m_2\frac{C}{(2L+1)^\epsilon} A_L.
\end{align}
In the fourth line of the above we used $\frac{1}{(2L+1)^{d-\epsilon}}\leq \frac{C}{(1+|n|)^{d-\epsilon}},~n\in\Lambda_L$, $C$ is a positive constant does not depend on $L$ and choose $0<\epsilon<\min\{2\alpha, d\}$. \\
Now an application of the Theorem \ref{eqiv} will give the proof of our result, provided we have
\begin{equation}
\label{est-1}
\lim_{L\to\infty}\frac{A_L}{(2L+1)^\epsilon} =0~~and~~\lim_{L\to\infty}\frac{X_L(\omega)}{(2L+1)^\epsilon}=0~~a.e~\omega.
\end{equation} 
Since $\{\omega_n^2-m_2\}_n$ are i.i.d random variables with zero mean, we find that the conditional expectation of 
$X_L$ given $X_i,~i=1, 2,\cdots, L-1$ satisfies 
\begin{align*}
\mathbb{E}\big(X_L(\omega)|X_0(\omega),\cdots, X_{L-1}(\omega)\big) &=X_{L-1}(\omega)+\mathbb{E}\bigg(\sum_{|n|=L}\omega_n^2-m_0^2\bigg)\\
&=X_{L-1}(\omega),
\end{align*}
showing that $X_L(\omega)$ is a martingale. Since $\displaystyle\sup_L\mathbb{E}\big(X_L(\omega\big))<\infty$,
the martingale convergence theorem  \cite[Theorem 5.7]{VS} shows that $X_L(\omega)$ converges a.e $\omega$ to a random variable which is finite almost everywhere which implies the second part of (\ref{est-1}) namely,
$$\frac{X_L(\omega)}{(2L+1)^\epsilon}\longrightarrow 0~as~L\to\infty~a.e~\omega. $$
The first part of (\ref{est-1}) is immediate as the choice of  $\epsilon<2\alpha$ gives
$$\displaystyle\sum_{n\in\mathbb{Z}^d}\frac{1}{(1+|n|)^{d+2\alpha-\epsilon}}<\infty.$$
Hence the theorem. \qed\\~\\
Now we are going to prove the Theorem {\ref{asy-dis}}. The key idea of the proof is that, since $\Delta$ is a bounded operator on $\ell^2(\mathbb{Z}^d)$ so the rate of growth of the absolute value of largest or smallest eigenvalue of $H^\omega_L:=\Delta_L+V^\omega_L$ and $V^\omega_L$ will be the same as $L\to\infty$. In that case we have to find asymptotic of the $(2L+1)^d~th$ or $1st$  order statistics of the collection of independent random variables \big\{$\frac{\omega_n}{(1+|n|)^\alpha} : n\in\Lambda_L\big\}$, the eigenvalues of $V^\omega_L$.  \\~\\
{\bf Proof of the Theorem \ref{asy-dis}:} Denote $\{\lambda^\omega_n\}_n$ and $\{\tilde{\lambda}^\omega_n\}_n$ be the set of all eigenvalues of
$H^\omega_L$ and $V^\omega_L$. Also, we can write down the explicit expression for $\tilde{\lambda}^\omega_n$ and it is given by 
$\frac{\omega_j}{(1+|n|)^\alpha}$,~$n\in\Lambda_L$, the diagonal elements of $V^\omega_L$. Now a simple application of Min-Max theorem give
\begin{equation}
\label{diff}
|\lambda_n^\omega-\tilde{\lambda}_n^\omega|\leq \|\Delta_L\|\leq 2d~~\forall~n\in\Lambda_L.
\end{equation}
Therefore, using the notations defined in (\ref{max-eig}) and (\ref{min-eig}) we get
\begin{equation}
\label{max-diff}
|E^{\max}_L(\omega)-\tilde{E}^{\max}_L(\omega)|\leq 2d,~~|E^{\min}_L(\omega)-\tilde{E}^{\min}_L(\omega)|\leq 2d~~a.e~\omega.
\end{equation}
Since $\Gamma_L\xrightarrow {L\to\infty} \infty$, see [3, Hypothesis \ref{hyp}] the above inequalities lead to
\begin{align}
\label{dis-equi-1}
\lim_{L\to\infty}\frac{E^{\max}_L(\omega)}{\Gamma_L}=\lim_{L\to\infty}\frac{\tilde{E}^{\max}_L(\omega)}{\Gamma_L}~~~~a.e~\omega\\
\label{dis-equi-2}
\lim_{L\to\infty}\frac{E^{\min}_L(\omega)}{\Gamma_L}=\lim_{L\to\infty}\frac{\tilde{E}^{\min}_L(\omega)}{\Gamma_L}~~~~a.e~\omega.
\end{align}
Now to prove our theorem it's enough to calculate the R.H.S of both of the above limits. Let's begin with the first one
\begin{align}
\mathbb{P}\bigg(\omega: \frac{\tilde{E}^{\max}_L(\omega)}{\Gamma_L}\leq x\bigg)
&=\prod_{n\in\Lambda_L}\mathbb{P}\bigg(\omega_n\leq (1+|n|)^\alpha ~\Gamma_L ~x\bigg)\nonumber\\
&=\prod_{n\in\Lambda_L}\bigg(1-\mathbb{P}\big(\omega_n> (1+|n|)^\alpha ~\Gamma_L ~x\big)\bigg)\nonumber\\
\label{cal-1}
&=:M_L(x).
\end{align}
Using [1, Hypothesis \ref{hyp} ] we get the estimation of $M_L(x)$ for $x\leq 0$,
\begin{align}
M_L(x) & =\prod_{n\in\Lambda_L}\bigg(1-\mathbb{P}\big(\omega_n> (1+|n|)^\alpha ~\Gamma_L ~x\big)\bigg)\nonumber\\
\label{neg}
& \leq \frac{1}{2^{(2L+1)^d}}.
\end{align}
Again using the Hypothesis \ref{hyp} for $x> 0$, we write
\begin{align}
\ln \big(M_L(x)\big) &=\sum_{n\in\Lambda_L}\ln \bigg(1-\mathbb{P}\big(\omega_n\ge (1+|n|)^\alpha ~\Gamma_L ~x\big)\bigg)\nonumber\\
&=\sum_{n\in\Lambda_L}\ln\bigg(1-\frac{x^{-\delta}}{2~(1+|n|)^{\alpha\delta}~\Gamma_L^\delta}\bigg)\nonumber\\
\label{pos}
&=-\frac{x^{-\delta}}{2}~\frac{1}{\Gamma_L^\delta}\sum_{n\in\Lambda_L}\frac{1}{(1+|n|)^{\alpha\delta}}+O\big(\mathcal{E}_L\big),
\end{align}
In the last line we used the Taylor series expansion of $\ln(1-x)$, as for a fixed positive $x$ and large enough $L$ we have
$0<\frac{x^{-\delta}}{\Gamma_L^\delta}<1$. The error term $\mathcal{E}_L$ can be estimated by
\begin{equation*}
\mathcal{E}_L:=\left\{
\begin{array}{rl}
(2L+1)^{-d}~~\text{if}~0<\alpha \delta <d\\
\big(\ln(2L+1)\big)^{-\frac{2}{\delta}} ~\text{if}~~~\alpha\delta=d.
\end{array}\right.
\end{equation*}
We substitute (\ref{pos}), (\ref{neg}) in (\ref{cal-1}) and used [3, Hypothesis \ref{hyp}] to get
\begin{equation}
 \label{max-eig-dis-1}
 \lim_{L\to\infty}\mathbb{P}\bigg(\omega: \frac{\tilde{E}^{max}_L(\omega)}{{\Gamma_L}}\leq x\bigg)=\left\{
\begin{array}{rl}
e^{-\frac{b}{2x^\delta}} & \text{if } ~~x>0\\
  0 & \text{if } ~~x\leq 0~.\\
\end{array} \right.
 \end{equation}
In view of the equivalence relation (\ref{dis-equi-1}) we have (\ref{max-eig-dis}).\\~\\
For the smallest eigenvalue we compute,
\begin{align}
\mathbb{P}\bigg(\omega: \frac{\tilde{E}^{\min}_L(\omega)}{\Gamma_L}\leq x\bigg)
&=1-\mathbb{P}\bigg(\omega: \frac{\tilde{E}^{\min}_L(\omega)}{\Gamma_L}> x\bigg)\nonumber\\
&=1-\prod_{n\in\Lambda_L}\mathbb{P}\bigg(\omega_n> (1+|n|)^\alpha ~\Gamma_L~x\bigg)\nonumber\\
&=1-\prod_{n\in\Lambda_L}\bigg(1-\mathbb{P}\big(\omega_n\leq (1+|n|)^\alpha ~\Gamma_L ~x\big)\bigg)\nonumber\\
\label{cal-2}
&:=1-\tilde{M}_L(x).
\end{align}
If we take (\ref{dis-equi-2}) into account, a similar estimation of $\tilde{M}_L(x)$ as we did for $M_L(x)$ in (\ref{neg}) and (\ref{pos})  will lead to the proof of (\ref{min-eig-dis}). \qed\\~\\
We end our article on a note about distribution of intermediate eigenvalues which are close to the edges of $\sigma(H^\omega_L)$. As one can observe that the method used in the proof of Theorem \ref{asy-dis} can also be applied to find asymptotic distributions of the first (or last) $k$ eigenvalues of $H^\omega_L$ with same normalization $\Gamma_L$, here $k$ is a positive integer.  In view of (\ref{diff}), we only have to find the $k$ number of order statistics from the above (or below) of the collection of independent random variables  
$\bigg\{\frac{\omega_n}{(1+|n|)^\alpha}\bigg\}_{n\in\Lambda_L}$.

\end{document}